\journal{Statistics \& Probability Letters}
\begin{document}

\newtheorem{theorem}{Theorem}[section]
\newtheorem{proposition}[theorem]{Proposition}
\newtheorem{lemma}[theorem]{Lemma}
\newtheorem{corollary}[theorem]{Corollary}
\newtheorem{assumption}[theorem]{Assumption}

\newcommand{\bbeta}{\boldsymbol{\beta}}
\newcommand{\aalpha}{\boldsymbol{\alpha}}
\newcommand{\ggamma}{\boldsymbol{\gamma}}
\newcommand{\argmax}{\mathop{\text{argmax}}}
\newcommand{\st}{\,|\,}

\frenchspacing

\begin{frontmatter}

\title{Asymptotic comparison of identifying constraints for Bradley-Terry models}

\author{Weichen Wu} \ead[mail]{wwu3@andrew.cmu.edu}
\author{Brian W. Junker}
\author{Nynke M.D. Niezink}
\address{Department of Statistics and Data Science, Carnegie Mellon University \\
Baker Hall,
Pittsburgh, PA 15213, USA}


\begin{abstract}
The Bradley-Terry model is widely used for pairwise comparison data analysis. In this paper, we analyze the asymptotic behavior of the maximum likelihood estimator of the Bradley-Terry model in its logistic parameterization, under a general class of linear identifiability constraints. We show that the constraint requiring the Bradley-Terry scores for all compared objects to sum to zero minimizes the sum of the variances of the estimated scores, and recommend using this constraint in practice. 
\end{abstract}

\begin{keyword}
Bradley-Terry model, pairwise comparison, identification constraints, standard errors, maximum likelihood estimation
\end{keyword}

\end{frontmatter}


\section{Introduction}

The Bradley-Terry model is widely used for analyzing pairwise comparison data \citep{bradley1952rank}. It assumes that all comparisons are independent, and endows every compared object with a score. The probability that one objects beats another in a comparison is a function of the scores of the two objects. The model is known in two forms. Originally, \cite{bradley1952rank} let the score of an object $i$ be $w_i^*>0$, and defined the probability that $i$ beats $j$ in a comparison by
\begin{equation}
    \mathbb{P}(\text{i beats j}) = \frac{w_i^*}{w_i^* + w_j^*}.
\label{bt-definition-1}
\end{equation}
By taking $\beta_i^* = \log (w_i^*)$, model \eqref{bt-definition-1} can be re-written as
\begin{equation}
\mathbb{P}(\text{i beats j}) = \sigma(\beta_i^*-\beta_j^*),
\label{bt-definition-2}
\end{equation}
in which $\sigma(\cdot)$ denotes the sigmoid function, $\sigma(t) = 1/(1+e^{-t})$. The $\beta$-pa\-ram\-etri\-zation \eqref{bt-definition-2} is used in the statistical software package \textsf{BradleyTerry2} in \textsf{R} \citep{turner2012bradley}, and empirical studies like \citep{liner2004methods} and \citep{varin2016statistical}. In this report, we focus on the $\beta$-parameterization, which has so far received less theoretical attention but more practical use. 

Probability \eqref{bt-definition-1} remains unchanged if all the $w$'s are multiplied by a constant and, equivalently, in \eqref{bt-definition-2}, if a constant is added to all the $\beta$'s. Therefore, in order to guarantee identifiability, the Bradley-Terry scores are usually estimated by constrained maximum likelihood estimation.
The reference constraint and the sum constraint are the two most widely-used constraints for Bradley-Terry models. For the reference constraint, one of the objects $i$ is set as a reference, and its score is set to $w_i = 1$ \citep{simons1999asymptotics} -- or, equivalently, $\beta_i = 0$. This constraint is used, for example, in the \textsf{R} packages 
\textsf{BradleyTerry2} \citep{turner2012bradley} and \textsf{prefmod} \citep{hatzinger2012prefmod}. For the sum constraint, the sum of all scores is set to a constant: in the $w$-parameterization, the sum of all $w$'s are usually set to $1$, as in theoretical papers \citep{bradley1952rank,ford1957solution,dykstra1960rank} and the \textsf{R} package \textsf{eba} \citep{wickelmaier2007eba}; in the $\beta$-parameterization, the sum of all $\beta$'s is usually set to $0$ \citep[e.g.,][]{varin2016statistical}. Clearly, the sum constraint under the $w$-parameterization is not equivalent to the sum constraint under the $\beta$-parameterization. 

The theoretical properties of the Bradley-Terry model have been studied extensively \citep{bradley1954incomplete,dykstra1960rank,ford1957solution,simons1999asymptotics,han2020asymptotic}. Most theoretical results are derived under the $w$-parameterization. For example, \cite{ford1957solution} gave the necessary and sufficient condition for the existence and uniqueness of the MLE under the sum constraint. \cite{dykstra1960rank} proved the consistency and asymptotic normality of the MLE under the sum constraint when the number of comparisons between all pairs of objects goes to infinity. \cite{simons1999asymptotics} proved the consistency and asymptotic normality under the reference constraint when the number of objects goes to infinity and all pairs of objects are compared the same number of times.  \cite{han2020asymptotic} generalized this result to the case where only a small proportion of pairs of objects are ever compared with each other.

In this paper, we extend these theoretical results to the $\beta$-parameterization \eqref{bt-mle} and address a practical issue. Specifically, we show in Section~\ref{s:the-mle} that the constrained MLE under the $\beta$-parameterization is unique under the condition posited by \cite{ford1957solution}. We then prove in  Section~\ref{s:asymptotics} the consistency and asymptotic normality of the estimators under a practical and flexible data collection method. Finally, in Section~\ref{s:variances} we discuss the estimated variance of the estimator under other linear constraints, and prove that the sum constraint is the unique constraint that minimizes the sum of the variances of the estimated scores for all objects. Moreover, as we show empirically in Section~\ref{s:example}, poor choice of a reference object for the reference constraint can artificially spread uncertainty to the other objects; this behavior is avoided when using the sum constraint, which tends to concentrate uncertainty of estimation on objects that have participated in fewer comparisons.  Thus, use of the sum constraint in the $\beta$ parameterization will help applied researchers in making inferences from pairwise comparison data. 

\section{Existence and uniqueness of the constrained MLE}\label{s:the-mle}


Suppose there are $n$ objects being compared, and let $W_{ij}$  denote the number of times that $i$ beats $j$, and $V_{ij}$ the number of times that $i$ and $j$ are compared with each other, for $i,j \in \{1,\ldots, n\}$. The corresponding matrices, $\mathbf{W} \in \mathbb{N}_0^{n \times n}$ and $\mathbf{V} \in \mathbb{N}_0^{n \times n}$, satisfy $\mathbf{V} = \mathbf{W + W^\top}$.
The MLE for the $\beta$-parameterization under the linear constraint $\aalpha^\top \boldsymbol{\beta} = 0$, with $\aalpha$ a non-zero vector in  $\mathbb{R}^n$, is 
\begin{equation}
\hat{\boldsymbol{\beta}}(\mathbf{W}) = \mathop{\text{argmax}}_{\aalpha^\top \boldsymbol{\beta} = 0} \ell(\boldsymbol{\beta};\mathbf{W}),
\label{bt-mle}
\end{equation}
where $\ell(\boldsymbol{\beta};\mathbf{W})$ is the log-likelihood function
\begin{equation}
\ell(\boldsymbol{\beta};\mathbf{W}) = \sum_{i \neq j} W_{ij} \log \sigma(\beta_i - \beta_j).
\label{log-likelihood}
\end{equation} 
Note that by taking $\aalpha = \mathbf{1} = (1,1,...,1)^\top$, we obtain the sum constraint, and by taking $\alpha = \mathbf{e}_i = (0,0,...,1,0,...,0)^\top$, we obtain the reference constraint.

The existence and uniqueness of the constrained MLE requires sufficient knowledge of the comparisons among the objects. The following assumption was proposed by \cite{ford1957solution} as a necessary and sufficient condition for the existence and uniqueness of the constrained MLE under the $w$-parameterization.

\begin{assumption}\label{assumption-A} In every possible partition of the objects into two nonempty subsets, some object in the second set has been preferred at least once to some object in the first.
\end{assumption}

This assumption can also be interpreted in graph-theoretical terms. Define the (directed) comparison graph $\mathcal{G} = (V,E)$, where the vertex set $V = \{1,2,...,n\}$ represents the objects, and the edge set $E$ contains all edges $i \to j$ such that $i$ has been preferred at least once over $j$. Then Assumption \ref{assumption-A} essentially states that $\mathcal{G}$ is strongly connected. That is, for every pair of objects $(i,j)$, there exists a directed path from $i$ to $j$. 
\cite{ford1957solution} proved the following theorem.

\begin{theorem} Assumption \ref{assumption-A} is necessary and sufficient for the existence and uniqueness of the constrained MLE for the Bradley-Terry model under the $w$-parameterization and the sum constraint $\mathbf{1}^\top \mathbf{w} = 1$.

\label{theorem-1}
\end{theorem}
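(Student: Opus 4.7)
The plan is to pass to the $\beta$-parameterization ($\beta_i = \log w_i$), in which the log-likelihood takes the concave form $\ell(\bbeta) = \sum_{i \neq j} W_{ij} \log \sigma(\beta_i - \beta_j)$ on all of $\mathbb{R}^n$. The sum constraint $\sum_i w_i = 1$ cuts out a smooth hypersurface in $\mathbb{R}^n$ that is transverse to $\operatorname{span}(\mathbf{1})$, the unique direction along which $\ell$ is constant. Existence and uniqueness of the $\mathbf{w}$-MLE is therefore equivalent to existence and uniqueness of a $\bbeta$-maximizer of $\ell$ modulo the translation $\bbeta \mapsto \bbeta + c\mathbf{1}$.

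For sufficiency, I would establish two properties of $\ell$ under Assumption~\ref{assumption-A}. First, \emph{strict concavity modulo translations}: since $\log \sigma$ is strictly concave, the one-dimensional restriction $t \mapsto \ell(\bbeta + t\mathbf{v})$ is strictly concave whenever $v_i \neq v_j$ for some pair with $W_{ij} + W_{ji} > 0$, and weak connectivity of $\mathcal{G}$ (which is implied by Assumption~\ref{assumption-A}) ensures this holds for every $\mathbf{v} \notin \operatorname{span}(\mathbf{1})$. Second, \emph{coercivity modulo translations}: for every such $\mathbf{v}$, $\ell(\bbeta_0 + t\mathbf{v}) \to -\infty$ as $t \to \infty$. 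To see this, let $S = \{i : v_i = \max_k v_k\}$, a proper nonempty subset of $V$; applying Assumption~\ref{assumption-A} to the ordered partition with first set $S$ and second set $V \setminus S$ produces $j^* \in V \setminus S$ and $i^* \in S$ with $W_{j^*i^*} > 0$, and the corresponding term $W_{j^*i^*} \log \sigma(\beta_{j^*} - \beta_{i^*} + t(v_{j^*} - v_{i^*}))$ diverges to $-\infty$, while all other terms stay bounded above by $0$. Together, strict concavity and coercivity on the constraint hypersurface give existence and uniqueness by a standard argument about continuous strictly concave coercive functions.

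For necessity, I would suppose Assumption~\ref{assumption-A} fails, producing a partition $V = S_1 \cup S_2$ with $W_{ji} = 0$ for every $j \in S_2$, $i \in S_1$. Moving $\bbeta$ along the direction $\mathbf{v}$ with $v_i = 1$ on $S_1$ and $0$ on $S_2$ keeps every likelihood term non-decreasing in $t$; if some $W_{ij}$ with $i \in S_1, j \in S_2$ is positive, at least one term is strictly increasing, so the supremum of $\ell$ is not attained at any interior point; if no such $W_{ij}$ is positive either, then $S_1$ and $S_2$ are completely decoupled and the relative scaling of their weights is unidentifiable. In either subcase, a unique interior MLE fails to exist on the constraint surface.

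The main obstacle will be the coercivity step above, as this is where the combinatorial content of Assumption~\ref{assumption-A} is actually used to force $\ell$ to decay in every non-trivial direction. Once coercivity and strict concavity modulo translations are in place, existence and uniqueness follow mechanically, and necessity comes from the explicit direction of ascent just described.
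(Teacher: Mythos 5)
Your argument is essentially correct, but note that the paper itself does not prove this statement: Theorem \ref{theorem-1} is quoted from \cite{ford1957solution}, and the paper only builds on it (via the likelihood-preserving bijection in \ref{appendix-A}) to obtain Corollary \ref{corollary-1}. So what you have written is a self-contained alternative to citing Ford, carried out in the $\beta$-parameterization: strict concavity of $t \mapsto \ell(\bbeta + t\mathbf{v})$ for every $\mathbf{v} \notin \textnormal{span}(\mathbf{1})$ (using connectivity of the comparison graph, the same mechanism the paper uses in Lemma \ref{lemma-1} to pin down $\ker H(\bbeta) = \textnormal{span}(\mathbf{1})$), plus coercivity in every such direction obtained by applying Assumption \ref{assumption-A} to the partition by the maximal coordinates of $\mathbf{v}$, and a direction of (weak or strict) ascent when the assumption fails. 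This is the standard modern route and it identifies correctly where the combinatorial content of Assumption \ref{assumption-A} enters (the coercivity step and the necessity step); Ford's original proof instead works directly with the $w$'s. Two small points to tighten: the surface $\{\bbeta : \sum_i e^{\beta_i} = 1\}$ is not convex, so ``strict concavity and coercivity on the constraint hypersurface'' should be phrased on the linear slice $\{\mathbf{1}^\top \bbeta = 0\}$ (or on the quotient modulo $\textnormal{span}(\mathbf{1})$), which is equivalent because $\ell$ is translation invariant and each translation orbit meets both sets exactly once; and the existence step should invoke the standard fact that a continuous concave function whose restriction to every ray diverges to $-\infty$ has bounded superlevel sets, hence attains its maximum. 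With those adjustments the sketch is a complete and correct proof, and in necessity case (a) you should add the one-line remark that after moving along $\mathbf{v}$ you renormalize by a multiple of $\mathbf{1}$ (equivalently rescale $\mathbf{w}$) to return to the constraint set without changing the likelihood.
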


Theorem \ref{theorem-1} guarantees the existence and uniqueness of the constrained MLE under reasonable conditions. The violation of Assumption \ref{assumption-A} means either that a group of objects has never been compared with the rest, or that this group has never been preferred over the rest. In both cases, it is hard to evaluate the comparative strength between the two groups, and it would be more reasonable to analyze them separately. A similar conclusion follows for the $\beta$-parameterization.

\begin{corollary} Assumption \ref{assumption-A} is necessary and sufficient for the existence and uniqueness of the constrained MLE for the Bradley-Terry model under the $\beta$-parameterization and the constraint  $\aalpha^\top \boldsymbol{\beta} = 0$  
for any $\boldsymbol{\alpha}$ such that $\mathbf{1}^\top \boldsymbol{\alpha} \neq 0$.

\label{corollary-1}
\end{corollary}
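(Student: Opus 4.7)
The plan is to reduce Corollary~\ref{corollary-1} to Theorem~\ref{theorem-1} by exploiting the bijection $w_i = e^{\beta_i}$ together with the shift-invariance of the log-likelihood $\ell(\boldsymbol{\beta};\mathbf{W})$. First I would observe that $\sigma((\beta_i+c)-(\beta_j+c))=\sigma(\beta_i-\beta_j)$ for every $c\in\mathbb{R}$, so $\ell(\boldsymbol{\beta}+c\mathbf{1};\mathbf{W})=\ell(\boldsymbol{\beta};\mathbf{W})$. Hence the likelihood is a well-defined function on the quotient $\mathbb{R}^n/\mathrm{span}(\mathbf{1})$, and the constrained optimization problems under different identifying constraints amount to picking different representatives of equivalence classes.

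Next I would make precise the claim that the two constraints pick exactly one representative per class. Under the $w$-parameterization, for any $\mathbf{w}\in(0,\infty)^n$ there is a unique rescaling $k\mathbf{w}$ with $\mathbf{1}^\top(k\mathbf{w})=1$, namely $k = 1/\mathbf{1}^\top\mathbf{w}$. Under the $\beta$-parameterization, for any $\boldsymbol{\beta}\in\mathbb{R}^n$ the equation $\boldsymbol{\alpha}^\top(\boldsymbol{\beta}+c\mathbf{1})=0$ has the unique solution $c = -\boldsymbol{\alpha}^\top\boldsymbol{\beta}/(\boldsymbol{\alpha}^\top\mathbf{1})$, and this is precisely where the hypothesis $\mathbf{1}^\top\boldsymbol{\alpha}\neq 0$ is used; otherwise the affine hyperplane $\{\boldsymbol{\beta}:\boldsymbol{\alpha}^\top\boldsymbol{\beta}=0\}$ is a union of equivalence classes (containing no or infinitely many representatives from each), and identifiability would fail trivially.

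For sufficiency, I would assume Assumption~\ref{assumption-A} holds and invoke Theorem~\ref{theorem-1} to obtain a unique maximizer $\hat{\mathbf{w}}$ of the $w$-likelihood under $\mathbf{1}^\top\mathbf{w}=1$. Setting $\hat{\boldsymbol{\beta}}_0 = (\log\hat w_1,\dots,\log\hat w_n)^\top$ produces a maximizer of $\ell(\cdot;\mathbf{W})$, and the shift
\[
\hat{\boldsymbol{\beta}} \;=\; \hat{\boldsymbol{\beta}}_0 \;-\; \frac{\boldsymbol{\alpha}^\top\hat{\boldsymbol{\beta}}_0}{\boldsymbol{\alpha}^\top\mathbf{1}}\,\mathbf{1}
\]
gives the unique element of its equivalence class satisfying $\boldsymbol{\alpha}^\top\boldsymbol{\beta}=0$. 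Uniqueness of $\hat{\boldsymbol{\beta}}$ follows because any other constrained maximizer would produce, via componentwise exponentiation and rescaling to unit sum, a competitor to $\hat{\mathbf{w}}$, contradicting Theorem~\ref{theorem-1}.

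For necessity, I would argue the contrapositive: if Assumption~\ref{assumption-A} fails, Theorem~\ref{theorem-1} says the $w$-MLE is either nonexistent or nonunique under the sum constraint. Translating back through the bijection and shift-invariance, the same pathology is inherited by the $\beta$-problem under $\boldsymbol{\alpha}^\top\boldsymbol{\beta}=0$. The only subtlety worth checking carefully is that nonexistence in the $w$-parameterization (the likelihood supremum being approached along a sequence with some $w_i\to 0$ or $w_i\to\infty$) translates to $\|\hat{\boldsymbol{\beta}}\|\to\infty$ along a corresponding sequence even after imposing the linear constraint; this is the main technical step but is routine once one notes that the linear constraint controls only one direction in $\mathbb{R}^n$, not the magnitude of the remaining $n-1$ coordinates. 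Overall I do not expect any deep obstacle — the entire argument is essentially the observation that the two identifying constraints parameterize the same quotient space in a one-to-one manner.
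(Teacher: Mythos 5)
Your proposal is correct and takes essentially the same route as the paper: Appendix A of the paper constructs exactly your map, $f_{\aalpha}(\mathbf{w}) = \log\mathbf{w} - \frac{\aalpha^\top \log\mathbf{w}}{\mathbf{1}^\top\aalpha}\,\mathbf{1}$, verifies it is a likelihood-preserving bijection between $\{\mathbf{w}:\mathbf{1}^\top\mathbf{w}=1\}$ and $\{\bbeta:\aalpha^\top\bbeta=0\}$ (using $\mathbf{1}^\top\aalpha\neq 0$ just as you do), and reduces existence and uniqueness to Theorem~\ref{theorem-1}. The ``subtlety'' you flag for the necessity direction is actually unnecessary: since the bijection matches the attained likelihood values on the two constraint sets exactly, a maximizer exists on one set if and only if it exists on the other, so no analysis of divergent sequences is required.
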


The proof of Corollary \ref{corollary-1} is given in \ref{appendix-A}.

\section{Asymptotic properties of the constrained MLE}\label{s:asymptotics}

The asymptotic behavior of the constrained MLE under the $w$-parameterization has been studied under various asymptotic regimes, implied by different data collection designs. 
The data collection design we will consider is as follows: suppose that  $n$ objects are compared, with $n$ fixed. Also, suppose that there are $S$ subjects making the comparisons, and that subject $s$ makes $V^{(s)}$ random comparisons, where the $V^{(1)},\dots,V^{(S)}$ are independent and identically distributed, with $0<\mathbb{E}[V^{(s)}] <\infty$ and $\text{Var}[V^{(s)}] < \infty$ (the i.i.d. assumption can be weakened, but we will maintain it in this paper for simplicity). 
Let $\mathbf{V}^{(s)}$ denote the comparison matrix corresponding to subject $s$, with its elements $V^{(s)}_{ij} = V^{(s)}_{ji}$ representing the number of times subject $s$ compares the pair of objects $(i,j)$. 

We here primarily consider the asymptotic properties of the MLE of $\bbeta$ under the sum constraint $\mathbf{1}^\top \bbeta = 0$, as $S\rightarrow\infty$. We focus on this constraint because, as will be shown in the next section, it has the unique advantage of minimizing the sum of the variances of the estimated scores. We will discuss how the results can be generalized to other linear constraints at the end of this section. We will discuss these properties when the total number of comparisons goes to infinity. 
We make the following assumption.

\begin{assumption} \label{equality-assumption}
There exists a matrix $\mathbf{P} \in [0,1]^{n \times n}$, such that for all subjects $s$,
\begin{equation}
    \mathbb{E}[\mathbf{V}^{(s)} \st V^{(s)}] = V^{(s)} \mathbf{P}.
\end{equation}
\end{assumption}
In matrix $\mathbf{P}$, element $P_{ij}$, represents the overall frequency that the pair $(i,j)$ is compared in the survey. The equality assumption basically states that this frequency holds for every subject in the survey. This is true when all subjects are asked to compare every pair of objects, or i.i.d. pairs of objects are chosen for comparisons by the subjects.

The following proposition shows that as the number of subjects $S$ grows to infinity, the overall comparison matrix $\mathbf{V}$ reflects the relative frequency with which each pair of objects is compared.

\begin{proposition}
Let $V = \sum_{s=1}^S V^{(s)}$ denote the total number of comparisons made by all subjects. Under the aforementioned data collection method, as the number of subjects $S$ grows to infinity,
\begin{equation}
\frac{\mathbf{V}}{V} \xrightarrow{p} \mathbf{P}.
\end{equation}
\end{proposition}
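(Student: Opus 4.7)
The plan is to reduce the convergence of $\mathbf{V}/V$ to a ratio of two sample averages, to each of which the standard weak law of large numbers applies, and then combine the limits with Slutsky's theorem.

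First I would write $\mathbf{V}/V = (\mathbf{V}/S)/(V/S)$ and handle the denominator and the (matrix-valued) numerator separately. For the denominator, since $V = \sum_{s=1}^S V^{(s)}$ is a sum of i.i.d.\ random variables with $0<\mathbb{E}[V^{(s)}]<\infty$, the weak law of large numbers gives $V/S \xrightarrow{p} \mathbb{E}[V^{(s)}]$. For the numerator, I would note that by Assumption~\ref{equality-assumption} and the tower property, $\mathbb{E}[\mathbf{V}^{(s)}] = \mathbb{E}[V^{(s)}]\mathbf{P}$. Since the entries of $\mathbf{V}^{(s)}$ are bounded above by $V^{(s)}$ and $\mathrm{Var}[V^{(s)}]<\infty$, every entry of $\mathbf{V}^{(s)}$ has finite variance, so the entrywise weak law of large numbers gives $\mathbf{V}/S \xrightarrow{p} \mathbb{E}[V^{(s)}]\mathbf{P}$.

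To finish, I would invoke Slutsky's theorem (equivalently, the continuous mapping theorem applied to the map $(A,t)\mapsto A/t$, which is continuous at $t=\mathbb{E}[V^{(s)}]>0$) to conclude
\begin{equation*}
\frac{\mathbf{V}}{V} \;=\; \frac{\mathbf{V}/S}{V/S} \;\xrightarrow{p}\; \frac{\mathbb{E}[V^{(s)}]\,\mathbf{P}}{\mathbb{E}[V^{(s)}]} \;=\; \mathbf{P}.
\end{equation*}

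The only delicate point, rather than a genuine obstacle, is making the moment bookkeeping explicit: one must observe that $|V^{(s)}_{ij}|\le V^{(s)}$ so that the finite second moment assumed for $V^{(s)}$ is inherited by each entry of $\mathbf{V}^{(s)}$, which in turn justifies the entrywise law of large numbers (Chebyshev's inequality suffices, so no stronger integrability hypothesis is needed). One should also note that $\mathbb{E}[V^{(s)}]>0$ is exactly what allows the continuous-mapping step to go through, and that $V>0$ with probability tending to one so the ratio $\mathbf{V}/V$ is eventually well-defined.
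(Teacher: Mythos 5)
Your proposal is correct and follows essentially the same route as the paper: divide numerator and denominator by $S$, apply the entrywise weak law of large numbers together with $V/S \xrightarrow{p} \mathbb{E}[V^{(s)}]$, and combine via Slutsky/continuous mapping. The only difference is that you spell out details the paper leaves implicit (the tower-property computation $\mathbb{E}[V^{(s)}_{ij}] = P_{ij}\,\mathbb{E}[V^{(s)}]$ from the equality assumption, and the bound $V^{(s)}_{ij}\le V^{(s)}$ giving finite entrywise variance), which is a welcome clarification rather than a different argument.
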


\emph{Proof.} 
According to the data collection design, $V^{(1)},\ldots,V^{(S)}$ are i.i.d.\ random variables with finite variance. According to Assumption \ref{assumption-A}, for every pair of objects $(i,j)$ and every subject $s \in \{1,2,...S\}$, $V_{ij}^{(s)}$ are also i.i.d.\ random variables with finite variance. Therefore, by the weak law of large numbers and the continuous mapping theorem, we have that
\begin{equation}
\pushQED{\qed} 
\frac{V_{ij}}{V} = \frac{\sum_{s=1}^S V_{ij}^{(s)}}{\sum_{s=1}^S V^{(s)}}
= \frac{\sum_{s=1}^S V_{ij}^{(s)}}{S} \times \frac{S}{\sum_{s=1}^S V^{(s)}}
\xrightarrow{p} P_{ij}\mathbb{E}[V^{(s)}] \times \frac{1}{\mathbb{E}[V^{(s)}]} = P_{ij}. \qedhere \popQED
\end{equation}

We now prove the following theorem, which shows the consistency of the constrained MLE.

\begin{theorem}\label{theorem-2}
Under the aforementioned data collection method, as long as the constrained MLE \eqref{bt-mle} exists and is unique, as $S \to \infty$, it satisfies
\begin{equation}
    \hat{\boldsymbol{\beta}} \xrightarrow{p} \boldsymbol{\beta}^*.
\end{equation}

\end{theorem}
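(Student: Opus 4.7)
The plan is to establish consistency by identifying the probability limit of the normalized log-likelihood and appealing to a convex-optimization version of the argmax theorem. First, I would normalize by $V = \sum_{s=1}^S V^{(s)}$ and consider $\bar{\ell}_S(\bbeta) = V^{-1}\ell(\bbeta;\mathbf{W})$. Mimicking the preceding proposition, note that the pairwise win counts $W_{ij}^{(s)}$ are i.i.d.\ across $s$ with finite variance (they are bounded by $V^{(s)}$, which has finite second moment) and mean $\mathbb{E}[V^{(s)}]\,P_{ij}\,\sigma(\beta_i^*-\beta_j^*)$. Combining the weak law of large numbers for the numerator with the Proposition for the denominator yields $W_{ij}/V \xrightarrow{p} P_{ij}\sigma(\beta_i^*-\beta_j^*)$ for each ordered pair, so
\begin{equation*}
\bar{\ell}_S(\bbeta) \xrightarrow{p} L(\bbeta) := \sum_{i \neq j} P_{ij}\,\sigma(\beta_i^*-\beta_j^*)\,\log\sigma(\beta_i - \beta_j)
\end{equation*}
pointwise in $\bbeta$.

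Next I would check that $\bbeta^*$ is the unique maximizer of $L$ subject to $\mathbf{1}^\top\bbeta = 0$. Pairing the $(i,j)$ and $(j,i)$ terms and using $\sigma(-t) = 1-\sigma(t)$ together with $P_{ij}=P_{ji}$, a routine rearrangement gives
\begin{equation*}
L(\bbeta^*) - L(\bbeta) = \sum_{i<j} P_{ij}\,\mathrm{KL}\!\bigl(\sigma(\beta_i^*-\beta_j^*)\,\big\|\,\sigma(\beta_i-\beta_j)\bigr) \geq 0,
\end{equation*}
where $\mathrm{KL}$ is the binary Kullback--Leibler divergence. Equality forces $\beta_i-\beta_j = \beta_i^*-\beta_j^*$ on every pair with $P_{ij}>0$. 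Because existence and uniqueness of the MLE requires Assumption~\ref{assumption-A}, the design graph (edges with $P_{ij}>0$) is connected in the large-$S$ limit, so these pairwise constraints pin down $\bbeta$ up to a global shift, and the sum-zero constraint kills the last degree of freedom, yielding $\bbeta = \bbeta^*$.

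To upgrade pointwise convergence of $\bar{\ell}_S$ to convergence of the argmax, I would lean on concavity: both $\bar{\ell}_S$ and $L$ are real-valued concave functions on $\mathbb{R}^n$, so pointwise convergence automatically strengthens to uniform convergence on compact subsets (Rockafellar, \emph{Convex Analysis}, Thm.~10.8). The hyperplane $\{\mathbf{1}^\top\bbeta=0\}$ is not compact, so the main obstacle is to confine $\hat{\bbeta}$ to a bounded set. I would handle this by showing that $L$ is coercive on the hyperplane: along any direction in $\{\mathbf{1}^\top\bbeta=0\}$, connectivity of the design graph forces some $|\beta_i-\beta_j|\to\infty$, and the corresponding $\log\sigma$ drives $L\to-\infty$. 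Hence any sub-level set of $-L$ on the hyperplane is bounded, uniform convergence on such a set forces $\hat{\bbeta}$ to lie in it with probability tending to one, and the standard argmax theorem together with the unique maximizer property then delivers $\hat{\bbeta} \xrightarrow{p} \bbeta^*$.
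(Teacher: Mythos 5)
Your proof is correct, but it follows a genuinely different route from the paper's. The paper's proof works at the level of the averaged data: it shows $\overline{\mathbf{W}} \xrightarrow{p} \mathbf{W}^*$ by the law of large numbers, asserts that the constrained maximizer $\hat{\boldsymbol{\beta}}(\cdot)$ is a continuous function of the data matrix, and invokes the continuous mapping theorem to get $\hat{\boldsymbol{\beta}}(\overline{\mathbf{W}}) \xrightarrow{p} \hat{\boldsymbol{\beta}}(\mathbf{W}^*)$; it then identifies $\hat{\boldsymbol{\beta}}(\mathbf{W}^*) = \boldsymbol{\beta}^*$ with the same Gibbs-type inequality ($p\log q + (1-p)\log(1-q)$ maximized at $q=p$) that underlies your KL computation, so your identification step is essentially the paper's. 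Where you diverge is in how convergence of the data is transferred to convergence of the argmax: you work with the normalized objective $V^{-1}\ell(\boldsymbol{\beta};\mathbf{W})$, prove pointwise convergence to the population criterion, and then use concavity (the convexity lemma giving uniform convergence on compacts) together with coercivity of the limit on the constraint hyperplane and the argmax theorem. This buys you a self-contained, standard M-estimation argument that explicitly handles the non-compactness of $\{\mathbf{1}^\top\boldsymbol{\beta}=0\}$ and justifies the passage to the limit of the maximizer, precisely the step the paper's proof leaves unargued (continuity of the constrained argmax in $\overline{\mathbf{W}}$ is asserted, not proved, and is really the crux). The paper's route, when the continuity claim is granted, is shorter and avoids any uniformity considerations. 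One shared looseness: both arguments ultimately need the population comparison graph ($P_{ij}>0$) to be connected so that the limiting maximizer is unique on the constraint set; you infer this from the hypothesis that the finite-sample MLE \eqref{bt-mle} exists and is unique (which forces Assumption \ref{assumption-A} on the realized data and hence positivity of the relevant $P_{ij}$), which is no less careful than the paper's implicit assumption that $\hat{\boldsymbol{\beta}}(\mathbf{W}^*)$ is unique.
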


\begin{proof}
Let $\mathbf{W}^{(s)}$ denote the contingency matrix corresponding to subject $s$, such that $W_{ij}^{(s)}$ represents the number of times that subject $s$ chooses object $i$ over object $j$. Define $\mathbf{W}^* \in \mathbb{R}^{n \times n}$  as its expectation, 
\begin{equation}
W_{ij}^* =\mathbb{E}[W_{ij}^{(s)}] = \mathbb{E}[\mathbb{E}[W_{ij}^{(s)} \st V_{ij}^{(s)}]] = \mathbb{E}[V_{ij}^{(s)} \sigma(\beta_i^* - \beta_j^*)],
\end{equation}
and let 
$\overline{\mathbf{W}} = \frac{1}{S} \mathbf{W} = \frac{1}{S}\sum_{s=1}^S \mathbf{W}^{(s)},
$
denote the sample mean of the contingency matrix of all subjects. Then,  since the variance of $W_{ij}^{(s)}$ is finite, according to the weak law of large numbers,
\begin{equation}
\overline{\mathbf{W}} \xrightarrow{p} \mathbf{W}^*.
\end{equation}
Since the log-likelihood function $\ell(\bbeta;\overline{\mathbf{W}})$ is continuous in both $\bbeta$ and $\overline{\mathbf{W}}$, $\hat{\boldsymbol{\beta}}(\overline{\mathbf{W}})$ is a continuous function of $\overline{\mathbf{W}}$. Hence, according to the continuous mapping theory, we have that
\begin{equation}
\hat{\boldsymbol{\beta}}(\mathbf{W}) = \hat{\boldsymbol{\beta}}(\overline{\mathbf{W}}) \xrightarrow{p} \hat{\boldsymbol{\beta}}(\mathbf{W}^*),
\end{equation}
where the first equality follows from the definition of the estimator. It remains to show that $\hat{\boldsymbol{\beta}}(\mathbf{W}^*) = \boldsymbol{\beta}^*$. By definition, the likelihood $\ell(\boldsymbol{\beta},\mathbf{W}^*)$ is given by
\begin{equation}
\begin{split}
\ell(\boldsymbol{\beta},\mathbf{W}^*) &= \sum_{1 \leq i<j \leq n} \left[W_{ij}^* \log \sigma(\beta_i - \beta_j) + W_{ji}^* \log \sigma(\beta_j - \beta_i) \right]\\
&= \sum_{1 \leq i<j \leq n} P_{ij}\, \mathbb{E}[V^{(s)}]  \left[\sigma(\beta_i^* - \beta_j^*) \log \sigma(\beta_i - \beta_j) + \sigma(\beta_j^* - \beta_i^*) \log \sigma(\beta_j - \beta_i)\right].
\end{split}
\end{equation}
Using the fact that $p \log q + (1-p) \log(1-q)$ is maximized when $p=q$, we have that
\begin{equation}
\begin{split}
\ell(\boldsymbol{\beta},\mathbf{W}^*) & \leq \sum_{1 \leq i<j \leq n} P_{ij}\, \mathbb{E}[V^{(s)}] \left[\sigma(\beta_i^* - \beta_j^*) \log \sigma(\beta_i^* - \beta_j^*) + \sigma(\beta_j^* - \beta_i^*) \log \sigma(\beta_j^* - \beta_i^*)\right]\\
& = \ell(\boldsymbol{\beta}^*,\mathbf{W}^*),
\end{split}
\end{equation}
which effectively means that $\hat{\boldsymbol{\beta}}(\mathbf{W}^*) = \boldsymbol{\beta}^*$, since $\hat{\boldsymbol{\beta}}(\mathbf{W}^*)$ is unique.
\end{proof}

Next, we construct the asymptotic distribution of the MLE under the sum constraint. Let $H(\bbeta,\mathbf{W})$ denote the Hessian matrix of the log-likelihood function,
\begin{equation}
H(\bbeta,\mathbf{W}) = \frac{\partial^2 \ell(\bbeta,\mathbf{W})}{\partial \bbeta^{2}}
= -\sum_{1\leq i < j \leq n} V_{ij} \sigma(\beta_i-\beta_j)\sigma(\beta_j - \beta_i)(\mathbf{e}_i - \mathbf{e}_j )(\mathbf{e}_i - \mathbf{e}_j )^\top.
\label{eq:hessian-matrix}
\end{equation}
For simplicity, we will use $H(\bbeta)$ to represent $H(\bbeta;\mathbf{W})$ when there is no ambiguity. The following lemma states two properties of $H(\bbeta;\mathbf{W})$ and serve as the basis of our analysis of the asymptotic properties of the constrained MLE. The proof of this lemma is given in \ref{appendix-C}.

\begin{lemma} \label{lemma-2} 
Let $H^\dagger(\bbeta)$ denote the pseudoinverse of $H(\bbeta)$.\footnote{A matrix $\mathbf{X} \in \mathbb{R}^{n \times n}$ is called the pseudoinverse of a matrix $\mathbf{A} \in \mathbb{R}^{n \times n}$, denoted as $\mathbf{X = A^\dagger} $, if the following four conditions hold: 1) $\mathbf{AXA = A}$,
2) $\mathbf{XAX = X}$,
3) $\mathbf{(AX)^\top = AX}$, and
4) $\mathbf{(XA)^\top = XA}$. \citep[,p.27]{ben2003generalized}.}
%
If Assumption \ref{assumption-A} holds, then for any scores $\tilde{\boldsymbol{\beta}} \in \mathbb{R}^n$ and $\boldsymbol{\beta} \in \mathbb{R}^{n}$ such that $\mathbf{1}^\top \boldsymbol{\beta} = 0$, it holds that $H^\dagger(\tilde{\boldsymbol{\beta}})H(\tilde{\boldsymbol{\beta}}) \boldsymbol{\beta} = \boldsymbol{\beta}$. Furthermore, $H^\dagger(\tilde{\boldsymbol{\beta}}) \mathbf{1} = 0$. 
\end{lemma}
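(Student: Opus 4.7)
The plan is to derive Lemma \ref{lemma-2} from two structural properties of $H(\tilde{\bbeta})$: that it is symmetric and negative semi-definite, and that $\ker(H(\tilde{\bbeta})) = \textnormal{span}(\mathbf{1})$. Once these are in hand, both conclusions of the lemma follow from standard facts about the Moore--Penrose pseudoinverse of a real symmetric matrix.

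First, I would read off symmetry and negative semi-definiteness directly from \eqref{eq:hessian-matrix}: each summand $(\mathbf{e}_i - \mathbf{e}_j)(\mathbf{e}_i - \mathbf{e}_j)^\top$ is symmetric and positive semi-definite, and the coefficient $V_{ij}\,\sigma(\tilde\beta_i - \tilde\beta_j)\sigma(\tilde\beta_j - \tilde\beta_i) \geq 0$ is non-negative, so with the leading minus sign $H(\tilde{\bbeta})$ is a symmetric non-positive combination of positive semi-definite matrices. The inclusion $\textnormal{span}(\mathbf{1}) \subseteq \ker(H(\tilde{\bbeta}))$ is immediate because $(\mathbf{e}_i - \mathbf{e}_j)^\top \mathbf{1} = 0$. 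The reverse inclusion is the key step: suppose $H(\tilde{\bbeta})\mathbf{v} = 0$ and form $\mathbf{v}^\top H(\tilde{\bbeta})\mathbf{v} = 0$. Since $\sigma(\tilde\beta_i - \tilde\beta_j)\sigma(\tilde\beta_j - \tilde\beta_i) > 0$ always, the resulting sum of non-positive terms can vanish only if $(v_i - v_j)^2 = 0$ whenever $V_{ij} > 0$. Assumption \ref{assumption-A} makes the directed preference graph strongly connected, hence its underlying undirected comparison graph connected, so this local equality propagates along paths to yield $v_i = v_j$ for every pair $i,j$, i.e.\ $\mathbf{v} \in \textnormal{span}(\mathbf{1})$.

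Given these facts, $H(\tilde{\bbeta})$ is a real symmetric matrix with $\ker(H(\tilde{\bbeta})) = \textnormal{span}(\mathbf{1})$ and $\textnormal{range}(H(\tilde{\bbeta})) = \textnormal{span}(\mathbf{1})^\perp = \{\mathbf{v} \in \mathbb{R}^n : \mathbf{1}^\top \mathbf{v} = 0\}$. Standard pseudoinverse properties then yield that $H^\dagger(\tilde{\bbeta})$ is symmetric with the same kernel and range as $H(\tilde{\bbeta})$, and that the product $H^\dagger(\tilde{\bbeta})H(\tilde{\bbeta})$ is the orthogonal projection onto $\textnormal{range}(H(\tilde{\bbeta})) = \textnormal{span}(\mathbf{1})^\perp$. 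The second assertion is then immediate, as $\mathbf{1} \in \ker(H(\tilde{\bbeta})) = \ker(H^\dagger(\tilde{\bbeta}))$. The first assertion follows because $\mathbf{1}^\top \bbeta = 0$ places $\bbeta$ in $\textnormal{span}(\mathbf{1})^\perp$, on which $H^\dagger(\tilde{\bbeta})H(\tilde{\bbeta})$ acts as the identity.

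The hard part is the kernel characterization: one must convert the strong connectivity of the directed preference graph supplied by Assumption \ref{assumption-A} into connectivity of the underlying undirected comparison graph (noting that $V_{ij} > 0$ whenever either $W_{ij}$ or $W_{ji}$ is positive) and then propagate the pairwise equalities $v_i = v_j$ across all coordinates. The pseudoinverse manipulations are routine, and can be carried out either via the spectral decomposition of $H(\tilde{\bbeta})$ or by directly invoking that for any real symmetric $A$, one has $\ker(A^\dagger) = \ker(A)$ and $A^\dagger A$ is the orthogonal projector onto $\ker(A)^\perp$.
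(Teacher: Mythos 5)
Your proposal is correct and follows essentially the same route as the paper: the core step, showing $H(\tilde{\bbeta})$ is symmetric negative semi-definite with $\ker(H(\tilde{\bbeta})) = \mathrm{span}(\mathbf{1})$ via the quadratic form $\mathbf{x}^\top H \mathbf{x} = -\sum_{i<j} V_{ij}\sigma(\tilde\beta_i-\tilde\beta_j)\sigma(\tilde\beta_j-\tilde\beta_i)(x_i-x_j)^2$ and the connectivity supplied by Assumption \ref{assumption-A}, is exactly the paper's auxiliary lemma, and the remaining pseudoinverse facts (same kernel and column space, identity action on the column space) are the same ones the paper invokes, differing only in that you cite the projector characterization of $H^\dagger H$ while the paper writes $\bbeta = H^\dagger \mathbf{x}$ and uses $H^\dagger H H^\dagger = H^\dagger$.
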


The following theorem gives the asymptotic distribution of the MLE of $\bbeta$ under the sum constraint.

\begin{theorem} \label{theorem-3}
Let $\hat{\bbeta}$ denote the MLE of $\bbeta$ under the sum constraint $\mathbf{1}^\top \boldsymbol{\beta} = 0$. Then as long as $\hat{\bbeta}$ exists and is unique, as $S \to \infty$, and therefore $V \to \infty$, we have that
\begin{equation}
\sqrt{V}(\hat{\boldsymbol{\beta}} - \boldsymbol{\beta}^*) \xrightarrow{d} \mathcal{N}(0,\mathcal{I}^\dagger(\boldsymbol{\beta}^*)).
\end{equation}
\end{theorem}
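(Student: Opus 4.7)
The plan is to follow the classical Taylor-expansion argument for constrained $M$-estimators, with the twist that the Hessian $H(\bbeta)$ is singular, so inversion must be replaced by the Moore--Penrose pseudoinverse and the sum constraint handled explicitly via Lemma \ref{lemma-2}.

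First I would write the Karush--Kuhn--Tucker condition: since $\hat\bbeta$ is the constrained maximizer, there exists a Lagrange multiplier $\hat\lambda\in\mathbb{R}$ with
\begin{equation*}
\nabla_{\bbeta}\,\ell(\hat\bbeta;\mathbf{W}) = \hat\lambda\,\mathbf{1}, \qquad \mathbf{1}^\top \hat\bbeta = 0.
\end{equation*}
Applying Taylor's theorem componentwise produces an intermediate point $\tilde\bbeta$ on the segment between $\hat\bbeta$ and $\bbeta^*$ with
\begin{equation*}
\nabla \ell(\hat\bbeta;\mathbf{W}) = \nabla \ell(\bbeta^*;\mathbf{W}) + H(\tilde\bbeta;\mathbf{W})\,(\hat\bbeta - \bbeta^*).
\end{equation*}
Without loss of generality take $\mathbf{1}^\top\bbeta^* = 0$, since $\bbeta^*$ is identified only up to an additive constant. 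Substituting into the KKT condition and left-multiplying by $H^\dagger(\tilde\bbeta;\mathbf{W})$, Lemma \ref{lemma-2} kills the right-hand side (because $H^\dagger(\tilde\bbeta)\mathbf{1}=0$) and collapses the Hessian term (because $\mathbf{1}^\top(\hat\bbeta-\bbeta^*)=0$ gives $H^\dagger(\tilde\bbeta)H(\tilde\bbeta)(\hat\bbeta-\bbeta^*)=\hat\bbeta-\bbeta^*$), yielding the clean identity
\begin{equation*}
\hat\bbeta - \bbeta^* = -\,H^\dagger(\tilde\bbeta;\mathbf{W})\,\nabla \ell(\bbeta^*;\mathbf{W}).
\end{equation*}

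Next I would normalize by $\sqrt{V}$, using the scaling property $(V^{-1}A)^\dagger = V\,A^\dagger$ (which is routine to check from the four defining conditions in the Lemma \ref{lemma-2} footnote), to obtain
\begin{equation*}
\sqrt{V}\,(\hat\bbeta - \bbeta^*) = -\Bigl(\tfrac{1}{V} H(\tilde\bbeta;\mathbf{W})\Bigr)^{\!\dagger} \cdot \tfrac{1}{\sqrt{V}}\,\nabla \ell(\bbeta^*;\mathbf{W}).
\end{equation*}
The two factors have standard asymptotic behaviour. The score $\nabla \ell(\bbeta^*;\mathbf{W})$ is a sum over subjects of i.i.d.\ mean-zero vectors with finite covariance (by the moment hypothesis on $V^{(s)}$ and Assumption \ref{equality-assumption}), so the multivariate CLT combined with the proposition $\mathbf{V}/V \xrightarrow{p}\mathbf{P}$ yields
\begin{equation*}
\tfrac{1}{\sqrt{V}}\,\nabla \ell(\bbeta^*;\mathbf{W}) \xrightarrow{d} \mathcal{N}\bigl(0,\mathcal{I}(\bbeta^*)\bigr),
\end{equation*}
where $\mathcal{I}(\bbeta^*)=\sum_{i<j} P_{ij}\,\sigma(\beta_i^*-\beta_j^*)\sigma(\beta_j^*-\beta_i^*)(\mathbf{e}_i-\mathbf{e}_j)(\mathbf{e}_i-\mathbf{e}_j)^\top$. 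Meanwhile, consistency of $\hat\bbeta$ from Theorem \ref{theorem-2} forces $\tilde\bbeta \xrightarrow{p}\bbeta^*$, and combining this with $\mathbf{V}/V\xrightarrow{p}\mathbf{P}$ and continuity of the expression \eqref{eq:hessian-matrix} in $(\bbeta,\mathbf{V}/V)$ gives $\tfrac{1}{V}H(\tilde\bbeta;\mathbf{W})\xrightarrow{p}-\mathcal{I}(\bbeta^*)$.

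Finally I would invoke continuity of the pseudoinverse together with Slutsky's theorem to conclude
\begin{equation*}
\sqrt{V}\,(\hat\bbeta - \bbeta^*) \xrightarrow{d} \mathcal{I}^\dagger(\bbeta^*)\cdot \mathcal{N}\bigl(0,\mathcal{I}(\bbeta^*)\bigr) = \mathcal{N}\bigl(0,\,\mathcal{I}^\dagger(\bbeta^*)\,\mathcal{I}(\bbeta^*)\,\mathcal{I}^\dagger(\bbeta^*)\bigr),
\end{equation*}
and collapse the covariance using the pseudoinverse identity $\mathcal{I}^\dagger\mathcal{I}\mathcal{I}^\dagger=\mathcal{I}^\dagger$ (property (2) of the footnote to Lemma \ref{lemma-2}) to recover $\mathcal{I}^\dagger(\bbeta^*)$.

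The main obstacle is the continuous-mapping step for the pseudoinverse: $A\mapsto A^\dagger$ is generally discontinuous at rank-deficient matrices, and here both $\tfrac{1}{V}H(\tilde\bbeta;\mathbf{W})$ and its limit $-\mathcal{I}(\bbeta^*)$ are rank-deficient with kernel $\mathrm{span}(\mathbf{1})$. The resolution is that under Assumption \ref{assumption-A} the rank is identically $n-1$ in a neighbourhood of $\bbeta^*$ (this is exactly the content of Lemma \ref{lemma-2}), which is the standard sufficient condition for continuity of the Moore--Penrose pseudoinverse. With that in hand the remaining steps are routine $M$-estimator asymptotics.
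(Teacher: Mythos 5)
Your proposal is correct and follows essentially the same route as the paper's proof: the first-order (Lagrangian/KKT) condition, a mean-value expansion of the score, left-multiplication by $H^\dagger(\tilde{\bbeta})$ using Lemma \ref{lemma-2} to eliminate the multiplier and collapse $H^\dagger H(\hat{\bbeta}-\bbeta^*)$, then a CLT over subjects for the score and Slutsky/consistency for the Hessian factor, with only the cosmetic difference that you normalize by $V$ directly rather than by $S$ with a $\sqrt{V/S}$ correction. Your closing remark on the constant-rank condition needed for continuity of the pseudoinverse is a point the paper's invocation of the continuous mapping theorem leaves implicit, and is a welcome refinement rather than a deviation.
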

\begin{proof} 
By definition, the MLE under the sum constraint is obtained by maximizing the Lagrangian function
\begin{equation}
\mathscr{L}(\boldsymbol{\beta},\mu;\mathbf{W}) = \ell(\boldsymbol{\beta},\mathbf{W}) + \mu \mathbf{1}^\top \boldsymbol{\beta}.
\end{equation}
By first-order condition and Lemma \ref{lemma-2}, we find that
\begin{equation}
\mathbf{0} =H^\dagger \nabla_{\boldsymbol{\beta}} \mathscr{L}(\boldsymbol{\beta},\mu;\mathbf{W}) \Big |_{\boldsymbol{\beta}}= H^\dagger(\tilde{\boldsymbol{\beta}}) (\nabla \ell(\hat{\boldsymbol{\beta}},\mathbf{W}) + \mu \mathbf{1}) = H^\dagger(\tilde{\boldsymbol{\beta}})\nabla \ell(\hat{\boldsymbol{\beta}},\mathbf{W}).
\label{eq-1}
\end{equation}
According to Lagrange's mean-value theorem, there exists a $\tilde{\boldsymbol{\beta}}$ between $ \hat{\boldsymbol{\beta}}$ and $\boldsymbol{\beta}^*$ such that

\begin{equation}
\begin{split}
H^\dagger(\tilde{\boldsymbol{\beta}})\nabla \ell(\boldsymbol{\beta}^*;\mathbf{W})& = H^\dagger(\tilde{\boldsymbol{\beta}})\nabla \ell(\hat{\boldsymbol{\beta}},\mathbf{W}) + H^\dagger(\tilde{\boldsymbol{\beta}})H(\tilde{\boldsymbol{\beta}})(\boldsymbol{\beta}^*  - \hat{\boldsymbol{\beta}}) \\
&= H^\dagger(\tilde{\boldsymbol{\beta}})\nabla \ell(\hat{\boldsymbol{\beta}},\mathbf{W}) +  \boldsymbol{\beta}^*  - \hat{\boldsymbol{\beta}} = \boldsymbol{\beta}^*  - \hat{\boldsymbol{\beta}},
\end{split}
\label{eq-2}
\end{equation}
according to \eqref{eq-1} and Lemma \ref{lemma-2}.
Meanwhile, since $H(\bbeta;\mathbf{W})$ is a linear functions of $\mathbf{W}$, we have $H(\bbeta;\overline{\mathbf{W}}) =  H(\bbeta;\mathbf{W}) / S$, and therefore $H^\dagger(\bbeta;\overline{\mathbf{W}}) = S \cdot H^\dagger(\bbeta;\mathbf{W})$. As a consequence,
\begin{equation}\label{eq-3}
\sqrt{V}(\hat{\boldsymbol{\beta}} - \boldsymbol{\beta}^*) = \sqrt{\frac{V}{S}}\left[S\cdot H^\dagger(\tilde{\boldsymbol{\beta}};\mathbf{W})\right] \left[\frac{1}{\sqrt{S}} \nabla \ell(\boldsymbol{\beta}^*;\mathbf{W}) \right] = \sqrt{\frac{V}{S}}\left[H^\dagger(\tilde{\boldsymbol{\beta}};\overline{\mathbf{W}})\right] \left[\frac{1}{\sqrt{S}} \nabla \ell(\boldsymbol{\beta}^*;\mathbf{W}) \right].
\end{equation}
Now that we have linked the difference between the true score $\bbeta^*$ and the estimator $\hat{\bbeta}$ with the gradient of the log-likelihood, we will use the law of large numbers and the central limit theorem to construct the asymptotic distribution of the MLE. We can write the gradient of the log-likelihood function $\nabla \ell(\boldsymbol{\beta}^*;\mathbf{W})$ as the sum of the gradients of the subject-specific log-likelihoods,
\begin{equation}
\nabla \ell(\boldsymbol{\beta}^*;\mathbf{W}) = \sum_{s=1}^S \nabla \ell(\boldsymbol{\beta}^*;\mathbf{W}^{(s)}). 
\end{equation}
It is easy to verify that $\mathbb{E}\left[\nabla \ell(\boldsymbol{\beta}^*;\mathbf{W}^{(s)}) \right] = 0$ and $\mathbf{Var}\left[\nabla \ell(\boldsymbol{\beta}^*;\mathbf{W}^{(s)}) \right]  = \mathbb{E}[V^{(s)}]\, \mathcal{I}(\boldsymbol{\beta}^*)$, where $\mathcal{I}(\bbeta^*)$ denotes
%
the Fisher information matrix evaluated in $\bbeta^*$. Therefore, it follows from the central limit theorem that
\begin{equation}\label{eq-4}
\frac{1}{\sqrt{S}} \nabla \ell(\boldsymbol{\beta}^*;\mathbf{W})  \xrightarrow{d} \mathcal{N}(0,\mathbb{E}[V^{(s)}]\mathcal{I}(\boldsymbol{\beta}^*)).
\end{equation}
On the other hand, since $\hat{\boldsymbol{\beta}} \xrightarrow{p} \boldsymbol{\beta}^*$ and $\overline{\mathbf{W}} \xrightarrow{p} \mathbf{W}^*$, by the continuous mapping theorem we find that
\begin{equation}\label{eq-5}
H^\dagger(\tilde{\boldsymbol{\beta}};\overline{\mathbf{W}}) \xrightarrow{p} H^\dagger(\boldsymbol{\beta}^*;\mathbf{W}^*) = -\frac{1}{\mathbb{E}[V^{(s)}]}\mathcal{I}^\dagger(\boldsymbol{\beta}^*).
\end{equation}
Finally, since $\frac{V}{S} \xrightarrow{p} \mathbb{E}[V^{s}]$, by combining the results from \eqref{eq-3},\eqref{eq-4} and \eqref{eq-5}, we obtain that 
\begin{equation}
\sqrt{V}(\hat{\boldsymbol{\beta}} - \boldsymbol{\beta}^*) \xrightarrow{d} \mathcal{N}(0, \mathcal{I}^\dagger(\boldsymbol{\beta}^*)\mathcal{I}(\boldsymbol{\beta}^*)\mathcal{I}^\dagger(\boldsymbol{\beta}^*)) = \mathcal{N}(0,\mathcal{I}^\dagger(\boldsymbol{\beta}^*)).
\end{equation}
\end{proof}

Theorem \ref{theorem-3} is important because it implies that, under a flexible data collection method, we can evaluate the uncertainty of $\hat{\boldsymbol{\beta}}$ as follows. Since
\[
V \cdot \mathbf{Var}[\hat{\boldsymbol{\beta}}] = \mathbf{Var}\left[\sqrt{V}(\hat{\boldsymbol{\beta}} - \boldsymbol{\beta}^*)\right]\approx \mathcal{I}^\dagger(\boldsymbol{\beta}^*)
\]
and $H(\hat{\boldsymbol{\beta}},\overline{\mathbf{W}}) \xrightarrow{p} \mathbb{E}[V^{(s)}]\,\mathcal{I}(\boldsymbol{\beta}^*)$, we find that the asymptotic variance of the MLE for $\bbeta$ under the sum constraint $\mathbf{1}^\top \boldsymbol{\beta} = 0$ is the pseudoinverse of the Fisher information matrix, and
\begin{equation}
\mathbf{Var}[\hat{\boldsymbol{\beta}}] \approx -H^\dagger(\hat{\boldsymbol{\beta}};\mathbf{W}).
\label{asym-variance}
\end{equation}
This is a generalization of the case for identifiable statistical models, where the asymptotic variance of the MLE is the (standard) inverse of the Fisher information matrix. This result is specific to the sum constraint and the $\beta$-parameterization: we will see that, for other kinds of linear constraints under the $\beta$-parameterization, the asymptotic variance of the estimator is the reflexive generalized inverse\footnote{A matrix $\mathbf{X} \in \mathbb{R}^{n \times n}$ is called the reflexive generalized inverse of a matrix $\mathbf{A} \in \mathbb{R}^{n \times n}$, if the following two conditions hold: 1) $\mathbf{AXA = A}$, and
2) $\mathbf{XAX = X}$.  \citep[,p.27]{ben2003generalized}.} 
of the Fisher information matrix (see Theorem \ref{eq:generalconstraint}). 

\section{Variance for the Bradley-Terry scores under different linear constraints}\label{s:variances}
In the previous section, we justified the use of the pseudoinverse of $-H(\hat{\bbeta})$ as the estimated variance of the Bradley-Terry scores under the sum constraint $\mathbf{1}^\top \bbeta = 0$. In this section, we will discuss the estimated variance under the more general class of linear constraints, and show that under the $\beta$-parameterization, the sum constraint minimizes the sum of the variances of the estimated scores for all objects. 

Suppose that the Bradley-Terry scores are subject to the constraint $\aalpha^\top \bbeta = 0$, in which $\aalpha \in \mathbb{R}^n $ is neither a multiple of $\mathbf{1}$ nor perpendicular to $\mathbf{1}$. If $\aalpha = c \mathbf{1}$ for some constant $c \neq 0$, the constraint is equivalent to the sum constraint. If $\mathbf{1}^\top \aalpha =0$, a constant can still be added to the scores of all subjects, with the constraint remaining satisfied and the log-likelihood function unchanged; therefore, the MLE would still not be unique. Use $\hat{\ggamma}$ to denote the MLE under this constraint. We will firstly prove the following theorem, which characterizes the estimated variance under this constraint.

\begin{theorem} \label{eq:generalconstraint}
The estimated variance $\widehat{\mathbf{Var}}(\hat{\ggamma})$ of  $\hat\ggamma$, the MLE (for $\beta$) under the constraint $\aalpha^\top \bbeta = 0$, where $\aalpha \notin \textnormal{span}(\mathbf{1})$ and $\mathbf{1}^\top \aalpha \neq 0$, is a reflexive generalized inverse of $-H(\hat{\ggamma})$. 
\end{theorem}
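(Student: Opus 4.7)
The plan is to reduce the general-constraint case to the sum-constraint result of Theorem \ref{theorem-3} by exploiting the shift-invariance of the log-likelihood. Since $\ell(\bbeta;\mathbf{W})$ depends on $\bbeta$ only through the differences $\beta_i - \beta_j$, the sum-constraint MLE $\hat{\bbeta}$ (from Theorem \ref{theorem-3}) and the MLE $\hat{\ggamma}$ under $\aalpha^\top \bbeta = 0$ must differ by a multiple of $\mathbf{1}$. The constraint $\aalpha^\top \hat{\ggamma} = 0$, together with the assumption $\aalpha^\top \mathbf{1} \neq 0$, pins the multiple down uniquely, giving
\[
\hat{\ggamma} \;=\; M\hat{\bbeta}, \qquad M \;=\; I - \frac{\mathbf{1}\aalpha^\top}{\aalpha^\top \mathbf{1}},
\]
where $M$ is an oblique projection of rank $n-1$ (one checks $M^2 = M$).

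Because $M$ is a deterministic linear map and $H(\bbeta;\mathbf{W})$ likewise depends on $\bbeta$ only through differences (so $H(\hat{\ggamma}) = H(\hat{\bbeta})$), the delta method combined with equation \eqref{asym-variance} yields
\[
\widehat{\mathbf{Var}}(\hat{\ggamma}) \;=\; M\,\widehat{\mathbf{Var}}(\hat{\bbeta})\,M^\top \;=\; -\,M\,H^\dagger(\hat{\ggamma})\,M^\top.
\]
Setting $A = -H(\hat{\ggamma})$ and $X = M A^\dagger M^\top$, the theorem reduces to verifying the two defining axioms $AXA = A$ and $XAX = X$ for a reflexive generalized inverse.

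Both verifications rest on a single observation: by Lemma \ref{lemma-2}, $\mathbf{1} \in \ker(H)$, and since $H$ is symmetric, $\mathbf{1}^\top H = 0$ as well. Hence $A\mathbf{1} = 0$ and $\mathbf{1}^\top A = 0$, and the correction terms in $M$ and $M^\top$ are annihilated:
\[
AM \;=\; A - \frac{A\mathbf{1}\,\aalpha^\top}{\aalpha^\top \mathbf{1}} \;=\; A, \qquad M^\top A \;=\; A - \frac{\aalpha\,\mathbf{1}^\top A}{\aalpha^\top \mathbf{1}} \;=\; A.
\]
The first axiom then follows from the pseudoinverse identity $A A^\dagger A = A$: $AXA = (AM) A^\dagger (M^\top A) = A A^\dagger A = A$. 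The second follows from $A^\dagger A A^\dagger = A^\dagger$: $XAX = M A^\dagger (M^\top A M) A^\dagger M^\top = M A^\dagger A A^\dagger M^\top = M A^\dagger M^\top = X$.

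The main obstacle is really only bookkeeping; the key insight is that sandwiching $A$ with $M$ or $M^\top$ has no effect because every correction term carries a factor of $\mathbf{1}$ or $\mathbf{1}^\top$ absorbed by $\ker(A)$. I would deliberately not attempt to verify Moore--Penrose conditions (3) and (4); they generically fail whenever $\aalpha \notin \mathrm{span}(\mathbf{1})$, since in that case $M \neq M^\top$ and $AX = M^\top \neq M = (AX)^\top$. This failure is exactly why Theorem \ref{eq:generalconstraint} delivers only a reflexive generalized inverse rather than the full pseudoinverse obtained under the sum constraint.
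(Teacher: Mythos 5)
Your proposal is correct and follows essentially the same route as the paper: the oblique projection $M = \mathbf{I} - \mathbf{1}\aalpha^\top/(\aalpha^\top\mathbf{1})$ mapping the sum-constrained MLE to $\hat{\ggamma}$, invariance of the Hessian under the shift, the sandwich formula $\widehat{\mathbf{Var}}(\hat{\ggamma}) = -MH^\dagger(\hat{\ggamma})M^\top$, and verification of the two reflexive-generalized-inverse axioms using $H\mathbf{1}=0$ and $\mathbf{1}^\top H=0$ (you make explicit the algebra the paper leaves as ``can be verified''). The only nitpick is attribution: $\mathbf{1}\in\ker(H)$ comes from Lemma \ref{lemma-1} (or directly from \eqref{eq:hessian-matrix}), while Lemma \ref{lemma-2} gives $H^\dagger\mathbf{1}=0$; this does not affect the argument.
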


\begin{proof}
For any $\hat{\bbeta},\hat{\ggamma}$ such that $\mathbf{1}^\top \hat{\bbeta} = 0 = \aalpha^\top \hat{\ggamma}$, we can construct the following bijection:
\begin{equation}
\begin{split}
&\hat{\bbeta} \mapsto \hat{\ggamma} = \hat{\bbeta} - \frac{\aalpha^\top \hat{\bbeta}}{\mathbf{1}^\top\aalpha } \mathbf{1} = (\mathbf{I} - \frac{\mathbf{1}\aalpha^\top}{\mathbf{1}^\top \aalpha})\hat{\bbeta}\, ,\\
&\hat{\ggamma} \mapsto \hat{\bbeta} = \hat{\ggamma} - \frac{\mathbf{1}^\top \hat{\ggamma}}{n} \mathbf{1}.
\end{split}
\label{eq:bijection}
\end{equation}
Then it is easy to verify that $\ell(\hat{\bbeta};\mathbf{W}) = \ell(\hat{\ggamma};\mathbf{W})$. Therefore, if $\hat{\bbeta}$ is the MLE under the sum constraint $\mathbf{1}^\top \hat{\bbeta} = 0$, then $\hat{\ggamma}$ is the MLE under the constraint $\aalpha^\top \bbeta = 0$. Meanwhile, it is easy to verify from \ref{eq:bijection} that the difference between scores of every pair of objects does not change under different constraints. Hence, the Hessian matrix (and therefore, the Fisher information matrix) \ref{eq:hessian-matrix} remains the same when we evaluate at $\hat{\ggamma}$ instead of $\hat{\bbeta}$. Thus, $H^\dagger(\bbeta) = H^\dagger(\ggamma)$. 

According to the transformation of variance,
\begin{equation}
\begin{split}
    \widehat{\mathbf{Var}}(\hat{\ggamma})&= (\mathbf{I} - \frac{\mathbf{1}\aalpha^\top}{\mathbf{1}^\top \aalpha}) \widehat{\mathbf{Var}}(\hat{\bbeta})(\mathbf{I} - \frac{\mathbf{1}\aalpha^\top}{\mathbf{1}^\top \aalpha})^\top \\
    &= -(\mathbf{I} - \frac{\mathbf{1}\aalpha^\top}{\mathbf{1}^\top \aalpha}) H^\dagger(\hat{\ggamma}) (\mathbf{I} - \frac{\aalpha\mathbf{1}^\top}{\mathbf{1}^\top \aalpha}).
\end{split}
\label{alpha-var}
\end{equation}
It follows from Lemma \ref{lemma-2} that $H^\dagger(\hat{\ggamma}) \mathbf{1} = 0$ and, therefore, $\mathbf{1}^\top H^\dagger(\hat{\ggamma}) = 0$.
Since the $H^\dagger(\hat{\ggamma})$ has the same column space and null space as $H(\hat{\ggamma})$ (page 36 of \citep{ben2003generalized}), we also have that $H(\hat{\ggamma}) \mathbf{1} = 0$ and $\mathbf{1}^\top H(\hat{\ggamma}) = 0$. It therefore can be verified that
\begin{equation}
\widehat{\mathbf{Var}}(\hat{\ggamma})(-H(\hat{\bbeta}))\widehat{\mathbf{Var}}(\hat{\ggamma}) = \widehat{\mathbf{Var}}(\hat{\ggamma})
\label{ref-inv-cond-1}
\end{equation}
and that
\[
H(\hat{\ggamma})\widehat{\mathbf{Var}}(\hat{\ggamma})H(\hat{\ggamma}) = -H(\hat{\ggamma}).
\]
Consequently, the estimated variance $\widehat{\mathbf{Var}}(\hat{\ggamma})$ is a reflexive generalized inverse of $-H(\hat{\ggamma})$. 
\end{proof}

From equation \eqref{alpha-var}, we see that $\widehat{\mathbf{Var}}(\hat{\ggamma})$ varies with $\aalpha$. Equation \eqref{alpha-var} also indicates the relationship between $\widehat{\mathbf{Var}}(\hat{\ggamma})$ and $\widehat{\mathbf{Var}}(\hat{\bbeta})$, and implies the following theorem.

\begin{theorem}\label{theorem-4}
The sum constraint $\mathbf{1}^\top \bbeta = 0$ is the one that minimizes the sum of the variances of all scores. 
\end{theorem}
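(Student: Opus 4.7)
The plan is to compute $\mathrm{tr}(\widehat{\mathbf{Var}}(\hat{\ggamma}))$, the sum of the marginal variances, and minimize it over admissible constraint vectors $\aalpha$ satisfying $c := \mathbf{1}^\top\aalpha \neq 0$. Writing $M := \mathbf{I} - \mathbf{1}\aalpha^\top/c$ and $A := -H^\dagger(\hat{\ggamma})$, equation \eqref{alpha-var} gives $\widehat{\mathbf{Var}}(\hat{\ggamma}) = M A M^\top$, so by the cyclic property of the trace,
\[
\mathrm{tr}(\widehat{\mathbf{Var}}(\hat{\ggamma})) = \mathrm{tr}(M A M^\top) = \mathrm{tr}(M^\top M \, A).
\]

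Next, I would expand
\[
M^\top M = \mathbf{I} - \frac{\mathbf{1}\aalpha^\top}{c} - \frac{\aalpha\mathbf{1}^\top}{c} + \frac{n\,\aalpha\aalpha^\top}{c^2},
\]
and invoke Lemma \ref{lemma-2} (which gives $A\mathbf{1} = \mathbf{0}$ and, by symmetry of $H$, also $\mathbf{1}^\top A = \mathbf{0}$) to kill both cross terms. After the cancellations only the identity and the rank-one $\aalpha\aalpha^\top$ pieces survive, so
\[
\mathrm{tr}(\widehat{\mathbf{Var}}(\hat{\ggamma})) = \mathrm{tr}(A) + \frac{n}{c^2}\,\aalpha^\top A\,\aalpha.
\]
Here the first term does not depend on $\aalpha$: as noted in the proof of Theorem \ref{eq:generalconstraint}, $H$ depends only on pairwise differences $\beta_i - \beta_j$, which are preserved by the bijection \eqref{eq:bijection}, so $H(\hat{\ggamma}) = H(\hat{\bbeta})$ regardless of the constraint, and hence $\mathrm{tr}(A)$ is a constant of the problem.

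Finally, $A = -H^\dagger(\hat{\ggamma})$ is symmetric positive semi-definite with kernel $\mathrm{span}(\mathbf{1})$: the Hessian $H$ is negative semi-definite with $\ker(H) = \mathrm{span}(\mathbf{1})$, and the Moore--Penrose pseudoinverse of a symmetric matrix preserves both the kernel and the sign pattern of the nonzero eigenvalues. Consequently $\aalpha^\top A\,\aalpha \geq 0$, with equality if and only if $\aalpha \in \mathrm{span}(\mathbf{1})$, i.e., exactly when $\aalpha^\top\bbeta = 0$ is equivalent to the sum constraint. Thus the sum of variances is uniquely minimized (over the set of admissible constraints) by the sum constraint, with minimum value $\mathrm{tr}(-H^\dagger(\hat{\bbeta}))$. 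The main potential pitfall is bookkeeping of signs and of kernel properties of $H^\dagger$; once Lemma \ref{lemma-2} is in hand, the algebraic core of the argument is just the rank-one trace cancellation that reduces the sum of variances to a constant plus a nonnegative quadratic form in $\aalpha$.
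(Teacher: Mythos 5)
Your proof is correct and follows essentially the same route as the paper: expand the trace of $\widehat{\mathbf{Var}}(\hat{\ggamma}) = M\,\widehat{\mathbf{Var}}(\hat{\bbeta})\,M^\top$ with $M = \mathbf{I} - \mathbf{1}\aalpha^\top/(\mathbf{1}^\top\aalpha)$, kill the two cross terms via Lemma \ref{lemma-2}, and observe that the surviving quadratic form in $\aalpha$ is strictly positive whenever $\aalpha \notin \mathrm{span}(\mathbf{1})$, since $-H^\dagger$ is positive semi-definite with kernel $\mathrm{span}(\mathbf{1})$. Incidentally, your coefficient $n/(\mathbf{1}^\top\aalpha)^2$ on the quadratic term is the correct one---the paper's displayed $1/(\mathbf{1}^\top\aalpha)^2$ drops the factor $n = \mathrm{Tr}(\mathbf{1}\mathbf{1}^\top)$, a harmless slip that does not affect the strict inequality or the conclusion.
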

\emph{Proof}.  Consider
\begin{equation}
\begin{split}
\sum_{i=1}^n \widehat{\text{Var}}(\hat{\gamma}_i) &= \text{Tr}(\widehat{\mathbf{Var}}(\hat{\ggamma}))\\
&= \text{Tr}(\widehat{\mathbf{Var}}(\hat{\bbeta})) - \text{Tr}(\frac{\mathbf{1}\aalpha^\top}{\mathbf{1}^\top \aalpha}\widehat{\mathbf{Var}}(\hat{\bbeta})) - \text{Tr}(\widehat{\mathbf{Var}}(\hat{\bbeta})\frac{\aalpha\mathbf{1}^\top}{\mathbf{1}^\top \aalpha}) +\text{Tr}(\frac{\mathbf{1}\aalpha^\top}{\mathbf{1}^\top \aalpha}\widehat{\mathbf{Var}}(\hat{\bbeta})\frac{\aalpha\mathbf{1}^\top}{\mathbf{1}^\top \aalpha})\\
&= \sum_{i=1}^n \widehat{\text{Var}}(\hat{\beta}_i) - \frac{1}{1^\top \aalpha} \text{Tr}(\aalpha^\top \widehat{\mathbf{Var}}(\hat{\bbeta})\mathbf{1})- \frac{1}{1^\top \aalpha} \text{Tr}(\mathbf{1}^\top \widehat{\mathbf{Var}}(\hat{\bbeta})\aalpha)\\
&\quad + \frac{1}{(\mathbf{1}^\top \aalpha)^2} \text{Tr}(\aalpha^\top \widehat{\mathbf{Var}}(\hat{\bbeta}) \aalpha).
\end{split}
\end{equation}

Therefore, according to Lemma \ref{lemma-2}, as long as $\mathbf{1}^\top \aalpha \neq 0$ and $\aalpha \notin \text{span}(\mathbf{1})$,
\begin{equation*}
\pushQED{\qed} 
\sum_{i=1}^n \widehat{\text{Var}}(\hat{\gamma}_i)= \sum_{i=1}^n \widehat{\text{Var}}(\hat{\beta}_i) - 0 - 0 + \frac{\aalpha^\top \widehat{\mathbf{Var}}(\hat{\bbeta}) \aalpha}{(\mathbf{1}^\top \aalpha)^2}
> \sum_{i=1}^n \widehat{\text{Var}}(\hat{\beta}_i). \qedhere \popQED
\end{equation*}

Theorem \ref{theorem-4} strongly suggests that we use the sum constraint in practice. Currently, some statistical packages, like \textsf{BradleyTerry2} in \textsf{R}, use the reference constraint ($\beta_1$ = 0). We illustrate the difference between these two practices by analyzing the data collected in a pairwise comparison survey.

\section{Example: pairwise comparison of activities}\label{s:example}

In March 2020, a group of faculty members at the Department of Statistics and Data Science of Carnegie Mellon University surveyed the department members about their favorite online activities during the COVID-19 pandemic. They used a so-called wiki survey \citep{salganik2012wiki}, in which the respondents (subjects) could vote between pairs of activities (objects) that were presented to them randomly, or enter their own activities for comparison. The survey lasted for about one month, and 52 respondents voted on 21 activities.  

We fit the Bradley-Terry model to these data under both the reference constraint and the sum constraint. For the reference constraint, the activity ``Virtual Meeting" is set as the reference by default of the \textsf{BradleyTerry2} package in \textsf{R}, because it happens to be the first activity (object) in the data set. We then estimate the variances of the scores, and plot the 2-standard error confidence intervals of the estimators under both constraints. The results are show in Figure \ref{fig:rank}.

The graphs show that under the reference constraint (the plot on the left), since the activity ``Virtual Meeting" is set as the reference, its score is $0$ without any uncertainty. However, the scores of all other activities have very wide confidence intervals, suggesting that we are very uncertain about their estimated scores. Thus it is difficult to say that one object dominates another in preference order.

On the other hand, under the sum constraint (the plot on the right), the confidence interval for the score of ``Virtual Meeting" is quite wide, while those of all other activities are relatively narrow. This shows that we are not certain about the estimate of the score of ``Virtual Meeting", but quite sure of others, among which ``Online origami" is the one with the most uncertainty.
The reason for this phenomenon is that ``Virtual Meeting" happens to be the activity that was involved in the least number of comparisons. The respondents made 750 comparisons in total, but ``Virtual Meeting" was only compared for 8 times. The greater uncertainty associated with the small number of comparisons becomes obvious under the sum constraint. However, under the reference constraint, when ``Virtual Meeting" is set as the reference activity, this uncertainty is artificially propagated to all of the other activities.

\begin{figure}
\begin{minipage}{0.5\textwidth}
    \centering
    \includegraphics[width = \textwidth]{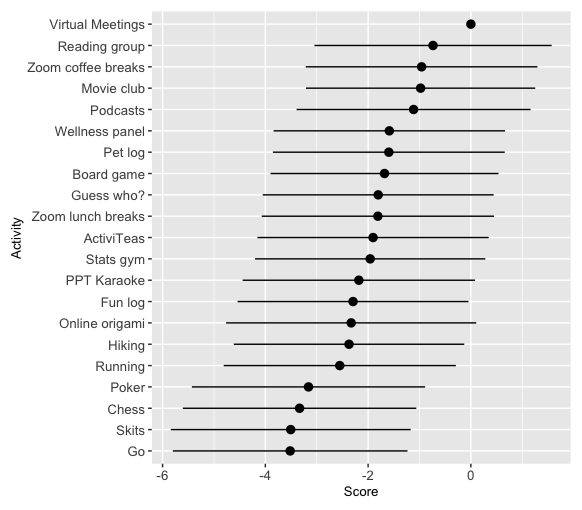}
\end{minipage}
\begin{minipage}{0.5\textwidth}
\centering
    \includegraphics[width = \textwidth]{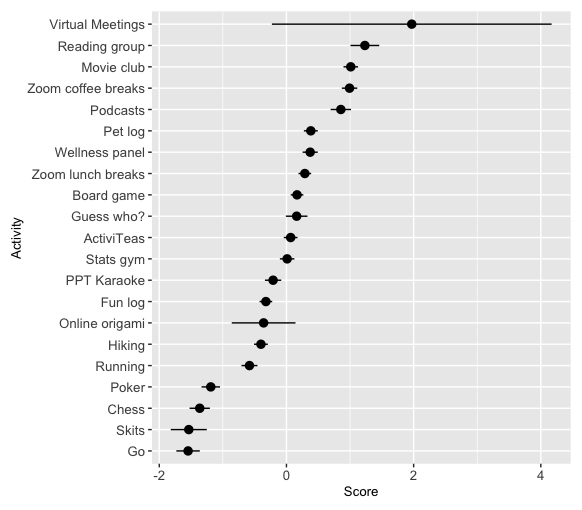}
\end{minipage}
\caption{The 2-standard-error confidence intervals for the estimated scores under the reference constraint (left) and the sum constraint (right).}
\label{fig:rank}
\end{figure}

\section{Discussion}

In this paper, we studied the asymptotic properties of the MLE for the Bradley-Terry model under the $\beta$-parameterization and the sum constraint. We firstly pointed out the conditions under which the estimator exists and is unique, and then stated that the estimator is consistent and asymptotically normal. Specifically, we showed that the asymptotic variance of the estimator is the pseudoinverse of the Fisher information matrix. Similar results have been proved by \cite{bradley1954incomplete} and \cite{dykstra1960rank}, who showed that under the $w$-parameterization, the estimated variance of the MLE is the reflexive generalized inverse of the Fisher information matrix;
our result, and its proof, however, is of special interest not only because it considers a different kind of parameterization and a more flexible data collection design, but also because it contains some insights about the asymptotic properties of a constrained MLE for an unidentifiable statistical model.

In particular, we pointed out that under the $\beta$-parameterization, using the sum constraint has the unique advantage of minimizing the sum of the variances of all the estimators. 
We showed in an empirical example that poor choice of the reference object for the reference constraint can artificially propagate uncertainty to other objects; this behavior is avoided when using the sum constraint. Thus, we strongly recommend using the sum constraint in practice.

\section*{Acknowledgements}

This work has been supported (in part) by Grant \#2003-22461 from the Russell Sage Foundation. Any opinions expressed are those of the authors alone and should not be construed as representing the opinions of the Foundation.

\begin{appendix}

\section{Proof of Corollary \ref{corollary-1}} \label{appendix-A}

\begin{proof}

Let the MLE for the Bradley-Terry model under the $w$-parameterization and the sum constraint be denoted by
\begin{equation}
\hat{\mathbf{w}}(\mathbf{W}) = \mathop{\text{argmax}}_{\mathbf{1^\top w} = 1} \ell(\mathbf{w;W})
\label{w-mle}
\end{equation}
for the log-likelihood function
$
\ell(\mathbf{w;W}) = \sum_{i \neq j} W_{ij} \log \left(w_i / (w_i+w_j)\right).
$
For any $\mathbf{w}$ such that $\mathbf{1}^\top \mathbf{w} = 1$ and any $\aalpha$ such that $\mathbf{1}^\top \aalpha \neq 0$, define
\[
f_{\aalpha}(\mathbf{w}) = \log \mathbf{w} - \frac{\aalpha^\top \log \mathbf{w}}{\mathbf{1}^\top \aalpha}\mathbf{1}.
\]
where $\log(\mathbf{w})=(\log w_1,\log w_2,...,\log w_n)$ is an element-wise operation. It is easy to verify that
\begin{enumerate}
\item $\aalpha^\top f_{\alpha}(\mathbf{w}) = 0$;
\item If $\bbeta = f_{\aalpha}(\mathbf{w})$, then $\ell(\mathbf{w;W}) = \ell(\bbeta;\mathbf{W})$;
\item For every $\bbeta$ such that $\aalpha^\top \bbeta = 0$, we have $f_{\aalpha}^{-1}(\bbeta) = \frac{\exp(\bbeta)}{\mathbf{1}^\top \exp(\bbeta)}$, with $\exp(\cdot)$ an element-wise operation.
\end{enumerate}
Consequently, $f_\alpha$ is a bijection from the set $\{\mathbf{w}:\mathbf{1}^\top \mathbf{w} = 1\}$ to the set $\{\bbeta:\aalpha^\top \bbeta = 0\}$  that preserves the value of the log-likelihood function. Therefore, the following two conditions are equivalent:
 \begin{enumerate}
     \item There exists a unique $\mathbf{w}^*$ such that $\mathbf{1}^\top \mathbf{w}^* = 1$ and
 $
     \ell(\mathbf{w}^*;\mathbf{W}) = \max_{\mathbf{1}^\top \mathbf{w}} \ell(\mathbf{w};\mathbf{W}).
  $
     \item There exists a unique $\bbeta^*$ such that $\aalpha^\top \bbeta = 0$ and 
  $
     \ell(\bbeta^*;\mathbf{W}) = \max_{\aalpha^\top \bbeta = 0} \ell(\bbeta;\mathbf{W}).
  $
 \end{enumerate}
Thus the existence and uniqueness of the MLE under the $\beta$-parameterization \eqref{bt-mle} is equivalent to the existence and uniqueness of the MLE for $w$-parameterization under the sum constraint \eqref{w-mle}, for which Theorem \ref{theorem-1} provides the necessary and sufficient condition (Assumption \ref{assumption-A}).
\end{proof}


\section{Proof of Lemma \ref{lemma-2}\label{appendix-C}}
In this section, we give a proof of Lemma \ref{lemma-2}. This proof is based on the properties of the pseudoinverse matrix and the following lemma.

\begin{lemma}\label{lemma-1}
If Assumption \ref{assumption-A} holds, then for any $\bbeta \in \mathbb{R}^n$, the Hessian $H(\bbeta)$ is negative semi-definite, and
\[
\ker(H(\bbeta)) = \textnormal{span}(\mathbf{1}).
\]
\end{lemma}

\emph{Proof of Lemma \ref{lemma-1}.} For any $\mathbf{x} = (x_1,x_2,...,x_n)^\top \in \mathbb{R}^n$, 
\begin{equation}
\begin{split}
 & \mathbf{x}^\top H(\bbeta,\mathbf{W}) \mathbf{x}\\
&= -\sum_{1\leq i < j \leq n} V_{ij} \sigma(\beta_i - \beta_j)\sigma(\beta_j - \beta_i) \mathbf{x}^\top (\mathbf{e}_i - \mathbf{e}_j) (\mathbf{e}_i - \mathbf{e}_j) ^\top \mathbf{x}\\
&= -\sum_{1\leq i < j \leq n} V_{ij} \sigma(\beta_i - \beta_j)\sigma(\beta_j - \beta_i) (x_i - x_j)^2 \leq 0.
\end{split}
\end{equation}
Furthermore, in order for the equality to be reached, for any pair $(i,j)$ such that $V_{ij}>0$, it must be true that $x_i = x_j$. Under Assumption \ref{assumption-A}, the comparison graph $\mathcal{G}$ is strongly connected. This effectively means that all entries of $\mathbf{x}$ are the same. In other words, $\mathbf{x} \in \text{span}(\mathbf{1})$. So in summary, $H(\bbeta,\mathbf{W})$ is negative semi-definite, and $H(\bbeta,\mathbf{W}) \mathbf{x} = 0$ if and only if $\mathbf{x} \in \text{span}(\mathbf{1})$. 
\qed

\emph{Proof of Lemma \ref{lemma-2}}
Lemma \ref{lemma-1} shows that for any $\tilde{\bbeta} \in \mathbb{R}^n$, the null space of $H(\tilde{\bbeta})$ is $\mathop{\textnormal{span}}(\mathbf{1})$. Therefore, if $\mathbf{1}^\top \bbeta = 0$, then $\bbeta$ is in the column space of $H(\tilde{\bbeta})$. The pseudoinverse of a matrix  has the same column space as the original matrix  \citep{ben2003generalized}, and thus $\bbeta$ is in the column space of $H^\dagger(\tilde{\bbeta})$. In other words, there exists $\mathbf{x} \in \mathbb{R}^n$, such that
\begin{equation}
\bbeta = H^\dagger(\tilde{\bbeta})\mathbf{x}.
\end{equation}
Therefore, by the definition of the pseudoinverse matrix,
\begin{equation}
H^\dagger(\tilde{\bbeta})H(\tilde{\bbeta})\bbeta = H^\dagger(\tilde{\bbeta})H(\tilde{\bbeta})H^\dagger(\tilde{\bbeta})\mathbf{x}= H^\dagger(\tilde{\bbeta})\mathbf{x}= \bbeta.
\end{equation}
Furthermore, since $H^\dagger(\tilde{\bbeta})$ has the same null space as $H(\tilde{\bbeta})$, we have that $H^\dagger(\tilde{\bbeta})\mathbf{1} = 0$.
\qed

\end{appendix}

\setlength{\bibsep}{0pt plus 0.3ex}

\bibliography{elsarticle-template/mybibfile}

\begin{thebibliography}{13}
\expandafter\ifx\csname natexlab\endcsname\relax\def\natexlab#1{#1}\fi
\providecommand{\url}[1]{\texttt{#1}}
\providecommand{\href}[2]{#2}
\providecommand{\path}[1]{#1}
\providecommand{\DOIprefix}{doi:}
\providecommand{\ArXivprefix}{arXiv:}
\providecommand{\URLprefix}{URL: }
\providecommand{\Pubmedprefix}{pmid:}
\providecommand{\doi}[1]{\href{http://dx.doi.org/#1}{\path{#1}}}
\providecommand{\Pubmed}[1]{\href{pmid:#1}{\path{#1}}}
\providecommand{\bibinfo}[2]{#2}
\ifx\xfnm\relax \def\xfnm[#1]{\unskip,\space#1}\fi
\bibitem[{Ben-Israel and Greville(2003)}]{ben2003generalized}
\bibinfo{author}{Ben-Israel, A.}, \bibinfo{author}{Greville, T.N.},
  \bibinfo{year}{2003}.
\newblock \bibinfo{title}{Generalized inverses: Theory and applications}.
  volume~\bibinfo{volume}{15}.
\newblock \bibinfo{publisher}{Springer Verlag}, \bibinfo{address}{New York}.
\bibitem[{Bradley(1954)}]{bradley1954incomplete}
\bibinfo{author}{Bradley, R.A.}, \bibinfo{year}{1954}.
\newblock \bibinfo{title}{Incomplete block rank analysis: On the
  appropriateness of the model for a method of paired comparisons}.
\newblock \bibinfo{journal}{Biometrics} \bibinfo{volume}{10},
  \bibinfo{pages}{375--390}.
\bibitem[{Bradley and Terry(1952)}]{bradley1952rank}
\bibinfo{author}{Bradley, R.A.}, \bibinfo{author}{Terry, M.E.},
  \bibinfo{year}{1952}.
\newblock \bibinfo{title}{Rank analysis of incomplete block designs: I. {T}he
  method of paired comparisons}.
\newblock \bibinfo{journal}{Biometrika} \bibinfo{volume}{39},
  \bibinfo{pages}{324--345}.
\bibitem[{Dykstra(1960)}]{dykstra1960rank}
\bibinfo{author}{Dykstra, O.}, \bibinfo{year}{1960}.
\newblock \bibinfo{title}{Rank analysis of incomplete block designs: A method
  of paired comparisons employing unequal repetitions on pairs}.
\newblock \bibinfo{journal}{Biometrics} \bibinfo{volume}{16},
  \bibinfo{pages}{176--188}.
\bibitem[{Ford~Jr(1957)}]{ford1957solution}
\bibinfo{author}{Ford~Jr, L.R.}, \bibinfo{year}{1957}.
\newblock \bibinfo{title}{Solution of a ranking problem from binary
  comparisons}.
\newblock \bibinfo{journal}{The American Mathematical Monthly}
  \bibinfo{volume}{64}, \bibinfo{pages}{28--33}.
\bibitem[{Han et~al.(2020)Han, Ye, Tan and Chen}]{han2020asymptotic}
\bibinfo{author}{Han, R.}, \bibinfo{author}{Ye, R.}, \bibinfo{author}{Tan, C.},
  \bibinfo{author}{Chen, K.}, \bibinfo{year}{2020}.
\newblock \bibinfo{title}{Asymptotic theory of sparse {B}radley-{T}erry model}.
\newblock \bibinfo{journal}{The Annals of Applied Probability}
  \bibinfo{volume}{30}, \bibinfo{pages}{2491--2515}.
\bibitem[{Hatzinger et~al.(2012)Hatzinger, Dittrich
  et~al.}]{hatzinger2012prefmod}
\bibinfo{author}{Hatzinger, R.}, \bibinfo{author}{Dittrich, R.}, et~al.,
  \bibinfo{year}{2012}.
\newblock \bibinfo{title}{Prefmod: An r package for modeling preferences based
  on paired comparisons, rankings, or ratings}.
\newblock \bibinfo{journal}{Journal of Statistical Software}
  \bibinfo{volume}{48}, \bibinfo{pages}{1--31}.
\bibitem[{Liner and Amin(2004)}]{liner2004methods}
\bibinfo{author}{Liner, G.H.}, \bibinfo{author}{Amin, M.},
  \bibinfo{year}{2004}.
\newblock \bibinfo{title}{Methods of ranking economics journals}.
\newblock \bibinfo{journal}{Atlantic Economic Journal} \bibinfo{volume}{32},
  \bibinfo{pages}{140--149}.
\bibitem[{Salganik and Levy(2015)}]{salganik2012wiki}
\bibinfo{author}{Salganik, M.J.}, \bibinfo{author}{Levy, K.E.C.},
  \bibinfo{year}{2015}.
\newblock \bibinfo{title}{Wiki surveys: Open and quantifiable social data
  collection}.
\newblock \bibinfo{journal}{PloS One} \bibinfo{volume}{10}.
\bibitem[{Simons and Yao(1999)}]{simons1999asymptotics}
\bibinfo{author}{Simons, G.}, \bibinfo{author}{Yao, Y.C.},
  \bibinfo{year}{1999}.
\newblock \bibinfo{title}{Asymptotics when the number of parameters tends to
  infinity in the {B}radley-{T}erry model for paired comparisons}.
\newblock \bibinfo{journal}{The Annals of Statistics} \bibinfo{volume}{27},
  \bibinfo{pages}{1041--1060}.
\bibitem[{Turner et~al.(2012)Turner, Firth et~al.}]{turner2012bradley}
\bibinfo{author}{Turner, H.}, \bibinfo{author}{Firth, D.}, et~al.,
  \bibinfo{year}{2012}.
\newblock \bibinfo{title}{Bradley-{T}erry models in \textsf{R}: the
  \textsf{BradleyTerry2} package}.
\newblock \bibinfo{journal}{Journal of Statistical Software}
  \bibinfo{volume}{48}.
\bibitem[{Varin et~al.(2016)Varin, Cattelan and Firth}]{varin2016statistical}
\bibinfo{author}{Varin, C.}, \bibinfo{author}{Cattelan, M.},
  \bibinfo{author}{Firth, D.}, \bibinfo{year}{2016}.
\newblock \bibinfo{title}{Statistical modelling of citation exchange between
  statistics journals}.
\newblock \bibinfo{journal}{Journal of the Royal Statistical Society: Series A
  (Statistics in Society)} \bibinfo{volume}{179}, \bibinfo{pages}{1}.
\bibitem[{Wickelmaier and Wickelmaier(2007)}]{wickelmaier2007eba}
\bibinfo{author}{Wickelmaier, F.}, \bibinfo{author}{Wickelmaier, M.F.},
  \bibinfo{year}{2007}.
\newblock \bibinfo{title}{The eba package} .

\end{thebibliography}

\end{document}